\theoremstyle{plain}
\newtheorem{theorem}{Theorem}[section]%[section]
\newtheorem{proposition}[theorem]{Proposition}
\newtheorem{lemma}[theorem]{Lemma}
\newtheorem{definition}[theorem]{Definition}
\newtheorem{remark}[theorem]{Remark}
\numberwithin{equation}{section}
\newcommand{\N}{\mathbb{N}}
\newcommand{\R}{\mathbb{R}}
\newcommand{\prob}{\mathbb{P}_N}
\newcommand{\qrob}{\mathbb{Q}_N}
\newcommand{\probsp}{\mathcal{P}([0,1])}
\newcommand{\veff}{V_{\mathrm{eff}}}
\begin{document}

\title{A remark on Jacobi ensemble} 
\author[I.\ Ozeki]{Ikuya OZEKI}
\address{
Graduate School of Mathematics, 
Nagoya University, 
Furocho, Chikusaku, Nagoya, 464-8602, Japan
}
\email{ikuyaoze@gmail.com}
\thanks{}
\subjclass[2000]{15A52; 60F10; 46L54}

\date{\today}
\maketitle
\begin{abstract}
We prove the large deviation principle for the supports of Jacobi ensembles following Guionnet's method. 
\end{abstract}

\allowdisplaybreaks{

\section{Introduction} 

Let $P(N), Q(N)$ be a sequence of two random projection matrices. Its statistical behavior can be understood by means of $P(N)Q(N)P(N)$, whose eigenvalue distribution is known to be a Jacobi ensemble in a natural setup. Hiai--Petz \cite{Hiai-Petz} proved the large deviation principle for empirical probability distributions of Jacobi ensembles. See \cite{Hiai-Petz} for further references on these facts.  

In this short note, we will explain that the method of \cite[Theorem 4.8]{Guionnet} (related results are summarized therein) due to Guionnet that establishes the large deviation principle for the supports of $\beta$-ensembles in the large $N$ limit certainly works well for the supports of ($\beta$-)Jacobi ensembles too.  

\section{Jacobi ensemble} 

Let $\probsp$ be the set of all probability measures on $[0,1]$ with a metric $d$ inducing the weak topology. 

\begin{definition}\label{Jacobi-ensemble}
For each $N\in \N$ and $n(N)\in \mathbb{N}, \kappa(N), \lambda(N)\in [0,\infty)$, let $\mathbb{P}_N = \mathbb{P}_N^{(n(N),\kappa(N),\lambda(N))}$ be the probability measure on $[0,1]^{n(N)}$ given by
\begin{equation}\label{eigenvalues-distribution}
\frac{1}{Z(N)}\exp\Big(-2n(N)\sum_{i=1}^{n(N)}V_N(x_i)+2\sum_{1\leq i<j \leq n(N)}\log|x_i-x_j|\Big)\prod_{i=1}^{n(N)}1_{[0,1]}(x_i)dx_i 
\end{equation}
with normalization constant $Z(N)=Z(N;n(N),\kappa(N),\lambda(N))$, where we define 
\[
V_N(x) = V_N^{(n(N),\kappa(N),\lambda(N))}:= -\frac{\kappa(N)}{2n(N)}\log x-\frac{\lambda(N)}{2n(N)}\log(1-x). 
\] 
\end{definition}

\begin{remark}{\textup($\beta$-Jacobi ensemble\textup)}
A $\beta$-Jacobi ensemble is a probability distribution over $[0, 1]^N$ whose density function \textup(with respect to the Lebesgue measure\textup) is proportional to
\[\prod_{i = 1}^N x_i^{a(N)}(1-x_i)^{b(N)}\prod_{1 \leq i < j \leq N}|x_i-x_j|^\beta. \]
This measure is a special case of Definition \ref{Jacobi-ensemble} when $n(N) = N$, $\kappa(N) = 2a(N)/\beta$ and $\lambda(N) = 2b(N)/\beta$. 
\end{remark}

Throughout this note, we assume that 
\begin{equation}\label{assumption}
n(N)/N\to \rho, \quad \kappa(N)/N\to \kappa, \quad \lambda(N)/N\to \lambda \qquad \text{(as $N\to \infty$)} 
\end{equation}
for some $\rho \in (0,\infty)$ and $\kappa, \lambda \in [0,\infty)$. Let us recall Hiai--Petz's result \cite[Proposition 2.1]{Hiai-Petz} on LDP for the sequence $\mathbb{P}_N$.

\begin{proposition}\label{H-P} The following hold{\rm:} 
\begin{itemize} 
\item[(i)] The limit $\lim_{N\to \infty}N^{-2}\log Z(N)$ exists and equals $\rho^2B(\kappa/\rho, \lambda/\rho)$ with the function $B(s, t)$ in \cite[proposition 2.2]{Hiai-Petz}
\item[(ii)] When $(x_1, \ldots, x_{n(N)})$ is distributed under $\prob$, the empirical probability measure
\[
\hat{\mu}_{n(N)} := \frac{1}{n(N)}\sum_{i=1}^{n(N)} \delta_{x_i}
\]
satisfies the large deviation principle in scale $1/N^2$ with good rate function
\begin{align*} 
I(\mu) 
&:= -\rho^2\iint \log|x-y|d\mu(x)d\mu(y) \\
&\qquad -\rho\int_0^1\kappa \log x+ \lambda \log(1-x)d\mu(x)+\rho^2B(\kappa/\rho, \lambda/\rho)
\end{align*}
for $\mu \in \probsp$. 
%for any open set $O$ and closed set $F$ of $\probsp$, 
%\begin{align} 
%&\liminf_{N\to \infty} \frac{1}{N^2}\log \prob(\hat{\mu}_n \in O) \geq -{\color{red} \inf_{\nu\in O}I(\nu)}, \\
%&\limsup_{N\to \infty} \frac{1}{N^2}\log \prob(\hat{\mu}_n \in F) \leq -\color{red}\inf_{\nu\in F}I(\nu). 
%\end{align} 
Moreover, there exists a unique minimizer $\mu_0\in \probsp$ of $I(\mu)$ with $I(\mu_0) = 0$. 
\end{itemize}
\end{proposition}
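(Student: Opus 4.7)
The plan is to follow the standard Ben Arous--Guionnet Coulomb-gas framework, adapted to the compact interval $[0,1]$ and the logarithmic singularities of $V_N$ at the endpoints. Set
\[
J(\mu) := -\rho^2 \iint \log|x-y|\,d\mu(x)d\mu(y) - \rho \int_0^1\bigl(\kappa\log x+\lambda\log(1-x)\bigr)d\mu(x),
\]
so that $I(\mu)=J(\mu)+\rho^2 B(\kappa/\rho,\lambda/\rho)$. First I would verify that $J$ (hence $I$) is lower semicontinuous on the compact space $\probsp$, which then automatically makes it a good rate function and yields existence of a minimizer. The standard device is to truncate $\log|x-y|$ at height $-M$, use weak continuity of the truncated (continuous, bounded) integrand, and let $M\to\infty$ by monotone convergence; a parallel truncation at the two endpoints handles the $\log x$ and $\log(1-x)$ terms. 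Strict convexity of the logarithmic energy $\mu\mapsto -\iint\log|x-y|d\mu d\mu$ on $\probsp$, combined with linearity in $\mu$ of the external-field term, then forces uniqueness of the minimizer $\mu_0$.

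Next, rewriting
\[
\sum_{i<j}\log|x_i-x_j| = \tfrac{n(N)^2}{2}\iint_{x\ne y}\log|x-y|\,d\hat\mu_{n(N)}(x)d\hat\mu_{n(N)}(y)
\]
and using $n(N)/N\to\rho$, $\kappa(N)/N\to\kappa$, $\lambda(N)/N\to\lambda$, the density in (\ref{eigenvalues-distribution}) takes the form $Z(N)^{-1}\exp\bigl(-N^2 J(\hat\mu_{n(N)}) + o(N^2)\bigr)$ modulo the diagonal contribution. For the LDP upper bound on a closed $F\subset\probsp$, I would truncate the near-diagonal interaction at level $-M$ to obtain a continuous functional $J_M$, apply a Laplace-type estimate on a weak neighborhood of $F$, and finally let $M\to\infty$. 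For the matching lower bound on an open set $O$ and $\mu\in O$ with $J(\mu)<\infty$, I would approximate $\mu$ by its quantile discretization $x_k^{*}=F_\mu^{-1}(k/n(N))$, show by a Riemann-sum estimate that the discrete energy of $(x_1^{*},\ldots,x_{n(N)}^{*})$ converges to $J(\mu)$, and then estimate the integrand over a small box of volume $\delta^{n(N)}$ around this configuration.

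Applying the two bounds to $F=O=\probsp$ yields $\lim_N N^{-2}\log Z(N)=-\inf_\mu J(\mu)$, which one identifies with $\rho^2 B(\kappa/\rho,\lambda/\rho)$, proving assertion (i) and simultaneously forcing $I\ge 0$ with equality only at $\mu_0$. The main obstacle, as in \cite{Hiai-Petz}, is the simultaneous presence of the short-range singularity of $\log|x_i-x_j|$ and the endpoint singularities of $V_N$ at $\{0,1\}$: one must show that configurations concentrating near the diagonal or near the boundary contribute only $o(N^2)$ to the log-density. The diagonal issue is absorbed by the truncation-plus-monotone-convergence trick above, while near the boundary an a priori bound of the form $\int_0^\varepsilon(-\log x)\,d\hat\mu_{n(N)}\le -\hat\mu_{n(N)}([0,\varepsilon])\log\varepsilon$ combined with Coulomb repulsion (which prevents too many particles from clustering near $0$ or $1$) keeps this contribution under control.
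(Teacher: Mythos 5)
The paper does not prove Proposition~\ref{H-P} at all: it is quoted verbatim from \cite[Proposition~2.1]{Hiai-Petz} (together with \cite[Proposition~2.2]{Hiai-Petz} for the constant $B$) and used as a black box in the proof of the main theorem. So there is no in-paper proof to compare against; you are reconstructing the Hiai--Petz argument.

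That said, your outline follows the standard Ben Arous--Guionnet Coulomb-gas strategy, which is indeed the route Hiai--Petz take, so the overall plan is sound: lower semicontinuity and goodness of $I$ via truncation of the kernel, uniqueness of $\mu_0$ from strict convexity of the logarithmic energy on $\probsp$ plus linearity of the external-field term, the exponential-tightness/upper bound by truncating $\log|x-y|$ at $-M$ and letting $M\to\infty$, and assertion (i) by taking $F=O=\probsp$ in the two bounds. One place you are too quick is the lower bound. Discretizing a general $\mu$ with $J(\mu)<\infty$ directly by quantiles $x_k^*=F_\mu^{-1}(k/n(N))$ does not obviously control the near-diagonal terms $\log|x_i^*-x_{i+1}^*|$ once the diagonal is removed, nor the endpoint terms $-\log x_1^*$, $-\log(1-x_{n(N)}^*)$ when $\mu$ has mass accumulating at $0$ or $1$. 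The standard fix, which your sketch omits, is a preliminary regularization: replace $\mu$ by a nearby $\mu_\epsilon$ with bounded density supported in $[\epsilon,1-\epsilon]$, check $J(\mu_\epsilon)\to J(\mu)$ (using, e.g., upper semicontinuity of the logarithmic energy under convolution and that $V$ is bounded on compact subsets of $(0,1)$), and only then discretize $\mu_\epsilon$ and put the small box around the resulting configuration. With that step added, the argument closes; without it, the Riemann-sum claim is not justified for all $\mu$ in the effective domain of $J$. Also, your appeal to ``Coulomb repulsion'' near the boundary is not actually needed: in the upper bound $\log|x-y|\le 0$ on $[0,1]^2$ and $V\ge 0$ already give a one-sided bound, and in the lower bound you choose the configuration yourself, so the repulsion heuristic plays no role.
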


\section{Main result} 

Define the effective potential
\[\veff(x) := V(x)-\int_0^1 \log|x-y|d\mu_0(y)-D_{\rho, \kappa, \lambda}, \]
where 
\[V(x) := -\frac{\kappa}{2\rho}\log x-\frac{\lambda}{2\rho}\log(1-x), \]
and
\[D_{\rho, \kappa, \lambda} := -B(\kappa/\rho, \lambda/\rho)-\int_0^1 V(x)d\mu_0(x). \]

Here are two lemmas. 

\begin{lemma}\label{effective-potential-property}
The effective potential $\veff$ satisfies the condition
\begin{align*}
\veff(x)
\begin{cases}
= 0 & {\text{quasi-everywhere on $\mathrm{supp}(\mu_0)$}}, \\
\geq 0 & {\text{if $x\in [0,1]\setminus\mathrm{supp}(\mu_0)$},} 
\end{cases}
\end{align*}
where $\mathrm{supp}(\mu_0)$ denotes the support of $\mu_0$. 
\end{lemma}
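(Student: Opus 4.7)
The plan is to invoke the variational (Frostman / Euler--Lagrange) characterization of the minimizer $\mu_0$ of a weighted logarithmic energy, and then match the resulting constant with $D_{\rho,\kappa,\lambda}$ using $I(\mu_0)=0$.

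First I would recast the problem as one of standard weighted potential theory. Since
\[
I(\mu) = \rho^{2}\!\left[-\iint \log|x-y|\,d\mu(x)\,d\mu(y) + 2\!\int V\,d\mu\right] + \rho^{2}B(\kappa/\rho,\lambda/\rho),
\]
minimizing $I$ over $\probsp$ is equivalent to minimizing the weighted logarithmic energy $J(\mu):=\tfrac12\iint\log|x-y|^{-1}d\mu\,d\mu+\int V\,d\mu$, so $\mu_0$ is the equilibrium measure on $[0,1]$ in the external field $V$. Classical results on such equilibrium measures (see, e.g., Saff--Totik, \emph{Logarithmic Potentials with External Fields}) then supply a constant $F$ with
\[V(x)-\int\log|x-y|\,d\mu_{0}(y)=F\quad\text{q.e.\ on }\mathrm{supp}(\mu_{0}),\]
\[V(x)-\int\log|x-y|\,d\mu_{0}(y)\geq F\quad\text{q.e.\ on }[0,1].\]

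Second I would identify $F=D_{\rho,\kappa,\lambda}$. Since $\mu_0$ has finite logarithmic energy (being a minimizer), it puts no mass on polar sets, so integrating the equality above against $\mu_{0}$ yields $\int V\,d\mu_{0}-\iint\log|x-y|\,d\mu_{0}\,d\mu_{0}=F$. Expanding $I(\mu_{0})=0$ from Proposition~\ref{H-P} gives $-\iint\log|x-y|\,d\mu_{0}\,d\mu_{0}+2\!\int V\,d\mu_{0}+B(\kappa/\rho,\lambda/\rho)=0$; subtracting the two relations produces $F=-B(\kappa/\rho,\lambda/\rho)-\int V\,d\mu_{0}=D_{\rho,\kappa,\lambda}$. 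Hence $\veff=0$ q.e.\ on $\mathrm{supp}(\mu_{0})$ and $\veff\geq 0$ q.e.\ on $[0,1]$.

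Third, I would upgrade the q.e.\ inequality on $[0,1]\setminus\mathrm{supp}(\mu_{0})$ to a pointwise one. On this open set the potential $\int\log|x-y|\,d\mu_{0}(y)$ is real-analytic (the integrand is smooth there), while $V$ is continuous on $(0,1)$ and tends to $+\infty$ at the endpoints when $\kappa,\lambda>0$. Thus $\veff$ is lower semicontinuous on $[0,1]$ and continuous on $(0,1)\setminus\mathrm{supp}(\mu_{0})$. Since polar subsets of $\R$ have no interior, their complement is dense, and the q.e.\ inequality $\veff\geq 0$ propagates to every point of $[0,1]\setminus\mathrm{supp}(\mu_{0})$ by continuity/LSC.

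The step I expect to be most delicate is not the constant-matching in Step~2 but the clean derivation of the variational conditions in Step~1: perturbations $\mu_{0}+t(\nu-\mu_{0})$ must be admissible despite the logarithmic singularities of $V$ at $0$ and $1$. Citing Saff--Totik handles this within their framework of admissible external fields; a self-contained argument would require one-sided differentiation of $t\mapsto I(\mu_{0}+t(\nu-\mu_{0}))$ at $t=0^{+}$ ranging over probability measures $\nu$ of finite weighted energy, with careful integrability bookkeeping near the endpoints.
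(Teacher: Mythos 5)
Your proof is correct and takes essentially the same route as the paper, which simply cites \cite[Theorem I.1.3]{Saff-Totik} (the Frostman variational conditions for the weighted equilibrium measure) and declares the lemma immediate. You usefully spell out what the paper leaves implicit: identifying the Frostman constant with $D_{\rho,\kappa,\lambda}$ by integrating the equality against $\mu_0$ and invoking $I(\mu_0)=0$, and then upgrading the quasi-everywhere bound off $\mathrm{supp}(\mu_0)$ to a pointwise one using continuity of the logarithmic potential away from the support (with the endpoint cases handled by $V\to+\infty$ when $\kappa>0$ or $\lambda>0$, and by plain continuity of $V$ there otherwise).
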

\begin{proof} This immediately follows from \cite[Theorem I.1.3]{Saff-Totik}. (See the proof of \cite[Proposition 2.1]{Hiai-Petz} too.)
\end{proof}
\newpage
%The proof below essentially follows the idea in \cite[Lemma 2.6.2]{Anderson-Guionnet-Zeitouni}. 
%\begin{proof}
%Let $\phi$ be a bounded Borel function on $[0,1]$ satisfying $\int_0^1 \phi(x)d\mu_0(x) = 0$. If $t > 0$ satisfies $t\|\phi\|_\infty \leq 1$, $\mu_0+t\phi \mu_0$ is a probability measure on $[0,1]$. {\color{red}Since  the measure $\mu_0$ is a {\color{blue} [削除]} minimizer of $I(\mu)$ by Proposition \ref{H-P}, we have} $I(\mu_0+t\phi \mu_0)-I(\mu_0) \geq 0$ which implies
%\[\int_0^1\int_0^1 \big(V(x)-\log|x-y|\big)\phi(x)\, d\mu_0(x) d\mu_0(y) \geq 0. \]
%This above argument holds even if we replace $\phi$ with $-\phi$, so
%\begin{equation}\label{eq-zero}
%\int_0^1\int_0^1 \big(V(x)-\log|x-y|\big)\phi(x)\, d\mu_0(x)d\mu_0(y) = 0. 
%\end{equation}
%Consider an arbitrary bounded Borel function $\psi$ on [0,1]. If $\phi(x)=\psi(x)-\int_0^1 \psi(x) d\mu_0(x)$ in \eqref{eq-zero}, then
%\[\int_0^1 \Big(V(x)-\int _0^1\log|x-y|d\mu_0(y)-D_{\rho, \kappa, \lambda}\Big)\psi(x)\, d\mu_0(x) = 0, \]
%where we used 
%\[\int_0^1\int_0^1 2V(x)-\log|x-y|\, d\mu_0(x)d\mu_0(y) = -B(\kappa/\rho, \lambda/\rho)\]
%from Proposition \ref{H-P}. Since $\psi$ is arbitrarily chosen, we have
%\[V(x)-\int_0^1\log|x-y|d\mu_0(y)-D_{\rho, \kappa, \lambda} = 0\ \mu_0\mathchar`-\mathrm{a.s.}\]
%Applying the above discussion $d\mu_t:=d\mu_0+t((-\int_0^1 h(x)dx)d\mu_0+h(x)dx)$ with a bounded Borel function such that $h \geq 0$, we obtain
%\[V(x)-\int_0^1\log|x-y|d\mu_0(y)-D_{\rho, \kappa, \lambda} \geq 0\ \mathrm{a.e.}\]
%on $[0,1]\backslash \mathrm{supp}(\mu_0)$. 
%\end{proof}
\begin{lemma}\label{exp-equivalence}
The probability measure $\qrob := \prob^{(n(N)-1,\kappa(N),\lambda(N))}$ on $[0,1]^{n(N)-1}$ is exactly 
\begin{align*}
\frac{1}{C(N)} &\exp\Big(-2n(N)\sum_{i=1}^{n(N)-1}V_N^{(n(N),\kappa(N),\lambda(N))}(x_i)\\
&\qquad+2\sum_{1\leq i<j \leq n(N)-1}\log|x_i-x_j|\Big) \prod_{i=1}^{n(N)-1}1_{[0,1]}(x_i)dx_i  
\end{align*}
with $C(N) = Z(N;n(N)-1,\kappa(N),\lambda(N))$. 
\end{lemma}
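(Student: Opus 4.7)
The plan is to reduce the statement to an algebraic identity about the potential, since the lemma is essentially a reformatting of the definition of $\prob^{(m,\kappa,\lambda)}$ with $m = n(N)-1$.

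First I would write out the density of $\qrob = \prob^{(n(N)-1,\kappa(N),\lambda(N))}$ directly from Definition \ref{Jacobi-ensemble} (with $n(N)$ replaced by $n(N)-1$ and the same $\kappa(N),\lambda(N)$). This yields
\[
\frac{1}{Z(N;n(N)-1,\kappa(N),\lambda(N))}\exp\!\Big(-2(n(N)-1)\!\!\sum_{i=1}^{n(N)-1}\!\!V_N^{(n(N)-1,\kappa(N),\lambda(N))}(x_i)+2\!\!\sum_{1\le i<j\le n(N)-1}\!\!\log|x_i-x_j|\Big)\prod_{i=1}^{n(N)-1}\!1_{[0,1]}(x_i)\,dx_i.
\]
The normalizing constant is, by definition, $C(N)$.

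The key observation is that the ``$mV_N^{(m,\kappa,\lambda)}$'' combination is independent of $m$: for any positive integer $m$,
\[
-2m\,V_N^{(m,\kappa(N),\lambda(N))}(x) \;=\; -2m\!\left(-\frac{\kappa(N)}{2m}\log x-\frac{\lambda(N)}{2m}\log(1-x)\right)\;=\;\kappa(N)\log x+\lambda(N)\log(1-x),
\]
so in particular $-2(n(N)-1)V_N^{(n(N)-1,\kappa(N),\lambda(N))}(x)=-2n(N)\,V_N^{(n(N),\kappa(N),\lambda(N))}(x)$. Substituting this identity into the exponent of the density above immediately produces the expression displayed in the statement of the lemma.

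There is no genuine obstacle here: the content is simply that the prefactor $2n$ in \eqref{eigenvalues-distribution} is tuned precisely to cancel the $1/(2n)$ normalization in the definition of $V_N$, so rewriting the density with the ``wrong'' $V_N$ (namely the one indexed by $n(N)$ instead of $n(N)-1$) costs nothing. The only mild care needed is to keep track that the range of indices in both the single and double sums is $1,\dots,n(N)-1$, and that the normalizer is $Z(N;n(N)-1,\kappa(N),\lambda(N))$, which matches the stated $C(N)$.
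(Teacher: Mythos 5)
Your proof is correct and takes essentially the same route as the paper: the paper states the one-line identity $V_N^{(n(N),\kappa(N),\lambda(N))}(x)=\tfrac{n(N)-1}{n(N)}V_N^{(n(N)-1,\kappa(N),\lambda(N))}(x)$, which is exactly your observation that $m\,V_N^{(m,\kappa,\lambda)}$ is independent of $m$, and the rest is just writing out Definition \ref{Jacobi-ensemble} with $m=n(N)-1$.
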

\begin{proof}
This follows from $V_N^{(n(N),\kappa(N),\lambda(N))}(x)=\frac{n(N)-1}{n(N)}V_N^{(n(N)-1,\kappa(N),\lambda(N))}(x)$. 
\end{proof}

Let us prove our main result. 

\begin{theorem}
Define the probability measure $\widehat{\prob}$ on $[0,1]$ by
\[
\widehat{\prob}(X) := \prob\big(\{(x_1, \ldots, x_{n(N)}) \in [0,1]^{n(N)}; \{x_1,\dots,x_{n(N)}\} \cap X \neq \emptyset \}\big)
\]
for any Borel subset $X$ of $[0,1]$. Then, the sequence $\widehat{\prob}$ satisfies the large deviation principle in scale $1/N$ with good rate function $2\rho \veff$, that is, for any open set $O$ and any closed set $F$ of $[0,1]$, 
\begin{align} 
&\liminf_{N\to \infty} \frac{1}{N}\log \widehat{\prob}(O) \geq -\inf_{x \in O}2\rho \veff(x), \label{true-LDP-lower-bound}\\
&\limsup_{N\to \infty} \frac{1}{N}\log \widehat{\prob}(F) \leq -\inf_{x \in F}2\rho \veff(x). \label{true-LDP-upper-bound}
\end{align} 
\end{theorem}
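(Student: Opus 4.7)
The plan is to reduce the support LDP to an asymptotic analysis of the one-point marginal of $\prob$ and then invoke Laplace's principle. By exchangeability and a union bound,
\[
\prob(x_1 \in X) \leq \widehat{\prob}(X) \leq n(N)\, \prob(x_1 \in X)
\]
for every Borel $X \subset [0,1]$, and since $\log n(N) = o(N)$ the theorem reduces to proving the same LDP bounds at scale $1/N$ for the marginal law of $x_1$ under $\prob$. Integrating out $x_2,\ldots,x_{n(N)}$ in (\ref{eigenvalues-distribution}) and rewriting using Lemma \ref{exp-equivalence} yields the factorization
\[
\prob(x_1 \in dx) = \frac{C(N)}{Z(N)}\, e^{-2n(N)V_N(x)}\, \Psi_N(x)\, dx, \qquad \Psi_N(x) := \E_{\qrob}\!\left[\prod_{j=1}^{n(N)-1}|x-y_j|^2\right].
\]

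The crux is the asymptotics of the integrand. Setting $U(x) := \int_0^1 \log|x-y|\,d\mu_0(y)$, I aim to show
\[
\frac{1}{N}\log\bigl(e^{-2n(N)V_N(x)}\Psi_N(x)\bigr) \longrightarrow -2\rho\bigl(V(x)-U(x)\bigr)
\]
uniformly enough in $x$ to feed into Laplace's method. For the upper bound on $\Psi_N$, I would use the exponential concentration $\hat{\mu}_{n(N)-1}\to \mu_0$ given by the Hiai--Petz LDP for $\qrob$ (the assumption (\ref{assumption}) ensures that $\qrob$ has the same equilibrium measure $\mu_0$ as $\prob$) together with the upper semi-continuity of $\mu \mapsto \int \log|x-y|\,d\mu(y)$ on $\probsp$. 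Splitting $\Psi_N$ on the event $B_\epsilon = \{d(\hat{\mu}_{n(N)-1},\mu_0) < \epsilon\}$, the main piece is bounded by $\exp\bigl(2(n(N)-1)(U(x)+\eta(\epsilon))\bigr)$ via upper semi-continuity, while the tail $\qrob(B_\epsilon^c) \leq e^{-c_\epsilon N^2}$ is negligible at scale $1/N$. The lower bound is the delicate direction, because $\mu \mapsto \int \log|x-y|\,d\mu$ is \emph{not} lower semi-continuous: the standard workaround is to truncate below by the continuous function $\log_\delta|x-y| := \max(\log|x-y|, \log\delta)$, apply weak convergence of $\hat{\mu}_{n(N)-1}$ against it, and then control the remainder $\int (\log_\delta - \log|x-y|)\, d\hat{\mu}_{n(N)-1}$ via a quantitative bound on the number of particles of $\qrob$ inside $(x-\delta, x+\delta)$; the regularity of $\mu_0$ on its support should make this harmless.

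With the pointwise asymptotics in hand, Laplace's method applied to $\prob(x_1 \in G) = \int_G e^{-2n(N)V_N}\Psi_N\,dx \big/ \int_0^1 e^{-2n(N)V_N}\Psi_N\,dx$ yields
\[
\frac{1}{N}\log\prob(x_1 \in G) \longrightarrow -\inf_{x\in G} 2\rho\bigl(V(x)-U(x)\bigr) + \inf_{x\in [0,1]} 2\rho\bigl(V(x)-U(x)\bigr),
\]
where the second term comes from the denominator (i.e.\ the normalization $\int_0^1\prob(x_1\in dx)=1$), for open $G = O$ with $\liminf$ and closed $G = F$ with $\limsup$. By Lemma \ref{effective-potential-property}, the definition of $\veff$ gives $V(x) - U(x) = \veff(x) + D_{\rho,\kappa,\lambda}$ with $\veff \geq 0$ on $[0,1]$ and $\veff \equiv 0$ quasi-everywhere on $\mathrm{supp}(\mu_0)$, so $\inf_{[0,1]}(V - U) = D_{\rho,\kappa,\lambda}$ and the constant $D_{\rho,\kappa,\lambda}$ cancels, leaving $-\inf_G 2\rho\veff$; this proves (\ref{true-LDP-upper-bound}) and (\ref{true-LDP-lower-bound}). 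The main obstacle will be the lower bound on $\Psi_N$: the truncation-error estimate has to be sharp enough to survive at the finer scale $1/N$ (rather than the $1/N^2$ native to the empirical measure LDP), and the logarithmic blow-up of $V_N$ at the endpoints $0,1$ needs to be absorbed so that no mass escapes to the boundary under Laplace's method.
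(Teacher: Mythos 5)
Your overall architecture --- reduce via a union bound to the one-particle marginal, factor the marginal using Lemma \ref{exp-equivalence}, appeal to the Hiai--Petz empirical-measure concentration at scale $1/N^2$, and finish with Laplace's method --- does match the paper's proof, and your upper-bound argument via the truncation $\log(|x-y|\vee L^{-1})$ is essentially identical to what the paper does (there written as $\Phi_N^L$).

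The genuine gap is in the lower bound, and it is exactly the one you flag at the end. Controlling the remainder $\int(\log_\delta - \log|x-y|)\,d\hat{\mu}_{n(N)-1}(y)$ by counting particles in $(x-\delta,x+\delta)$ cannot work on its own: a \emph{single} particle $y_j$ with $|y_j - x| \leq e^{-cN}$ already contributes an error of size $cN/n(N)$ to the integral, hence $O(1)$ to $\frac{1}{N}\log$ after multiplying by $2(n(N)-1)$. You would therefore need a lower bound on the distance from $x$ to the nearest particle under $\qrob$ --- i.e.\ a rigidity or level-repulsion estimate --- which the empirical-measure LDP does not give you and which is nowhere available in this paper. ``Regularity of $\mu_0$'' is a statement about the $N\to\infty$ limit, not about the finite-$N$ configuration, so it cannot close this gap, and no amount of sharpening the particle count fixes the fact that the logarithm is unbounded below.

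The paper sidesteps the problem by not estimating the one-point density pointwise at all. It lower bounds $\gamma_N$ of the small interval $(x-\delta,x+\delta)$ and applies Jensen's inequality in \emph{both} the $\xi$-integral over that interval and the $\qrob$-expectation. The $\xi$-averaging replaces the singular kernel $\log|\xi-\eta|$ by $H_{x,\delta}(\eta) = \frac{1}{2\delta}\int_{x-\delta}^{x+\delta}\log|\xi-\eta|\,d\xi$, which is a \emph{continuous and bounded} function of $\eta$. Weak convergence $\hat{\mu}_{n(N)-1}\to\mu_0$ (a.s.\ under $\qrob$, courtesy of Proposition \ref{H-P}) then suffices, with no repulsion input whatsoever; the singularity of $\log$ has been smoothed by averaging in $\xi$, not in $\eta$, and the $\delta\to 0$ limit recovers $\int\log|x-\eta|\,d\mu_0(\eta)$ by dominated convergence and the fundamental theorem of calculus. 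Replacing your pointwise truncation scheme by this Jensen device is what makes the lower bound go through at scale $1/N$. (The endpoint blow-up you worry about is also harmless: $V(x)-\int\log|x-y|\,d\mu_0(y)=+\infty$ at $x\in\{0,1\}$, so $\{0,1\}$ never realizes the infimum.)
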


Lemma \ref{effective-potential-property} shows that $\inf_{x \in O}\veff(x) = 0$ if $O\cap \mathrm{supp}(\mu_0) \neq \emptyset$ for any open subset $O$ of $[0,1]$. Thus, the ``state space" of this large deviation principle should sit inside $[0,1]\backslash \mathrm{supp}(\mu_0)$ rather than $[0,1]$. 

\medskip
The proof below essentially follows the idea in \cite[Theorem 4.8]{Guionnet}, but we have to take care of some details (for example, the part of proving \eqref{LDP-upper-bound} follows \cite[Lemma 2.6.7]{Anderson-Guionnet-Zeitouni} instead). 

\begin{proof}
The proof is divided into four steps. 

\medskip\noindent
\underline{Step 1. $\veff$ is a good rate function:} 
Note that the effective potential $\veff$ is lower semicontinuous on $[0,1]$. Indeed, $\veff(x) = \sup_{M>0}\veff^M(x)$ where 
\[
\veff^M(x) := (V(x)\land M)-\int_0^1\log(|x-y|\lor M^{-1})d\mu_0(y)-D_{\rho, \kappa, \lambda} 
\]
for any $M>0$, which is clearly continuous on $[0,1]$. Thus, the effective potential $\veff(x)$ is lower semicontinuous. It immediately follows that $\{x \in [0,1]; \veff(x)\leq K\}$ is closed and hence compact for any $K\in \R$. 

\bigskip
Define
\[
\gamma_N(X) := \qrob\Big[\int_X \exp\Big(-2n(N)V_N(\xi)+2(n(N)-1)\int_0^1\log|\xi-\eta|d\hat{\mu}_{n(N)-1}(\eta)\Big)d\xi\Big],  
\]
where $\qrob[\,\cdot\,]$ denotes the expectation with respect to $\qrob$ (see Lemma \ref{exp-equivalence}). 

\bigskip\noindent
\underline{Step 2. A large deviation lower bound for $\gamma_N$:} 
Let $O$ be an open subset of $[0,1]$. Since $O \cap (0,1)$ is open in $\mathbb{R}$, we choose and fix an arbitrary $x \in O \cap (0,1)$ so that $[x-\delta, x+\delta] \subset O \cap (0,1)$ for all sufficiently small $\delta>0$. We fix such a $\delta>0$ for a while. In what follows, we write $V_N = V_N^{(n(N),\kappa(N),\lambda(N))}$ for short as in \S2. 

We have 
\begin{align*}
&\gamma_N\Big(O\cap (0,1)\Big)\geq \\
&\qquad\qrob\Big[\int_{x-\delta}^{x+\delta} \exp\Big(-2n(N)V_N(\xi)+2(n(N)-1)\int_0^1\log|\xi-\eta|d\hat{\mu}_{n(N)-1}(\eta)\Big)d\xi\Big] 
\end{align*}
with $\hat{\mu}_{n(N)-1} = (n(N)-1)^{-1}\sum_{i=1}^{n(N)-1}\delta_{x_i}$. Since $V_N(\xi)$ converges to $V(\xi)$ uniformly on $[x-\delta, x+\delta]$, for any $\varepsilon>0$, there exists an $N_0\in \N$, such that $N\geq N_0$ implies $V_N(\xi)\leq V(\xi)+\varepsilon$ for all $\xi \in [x-\delta, x+\delta]$. 
Hence 
\begin{align*}
&\gamma_N\Big(O\cap (0,1)\Big) \geq \\
&\quad\qrob\Bigg[\int_{x-\delta}^{x+\delta}\exp\Big(-2n(N)(V(\xi)+\varepsilon)+2(n(N)-1)\int_0^1\log|\xi-\eta|d\hat{\mu}_{n(N)-1}(\eta)\Big)d\xi\Bigg]. 
\end{align*} 
With $E_\delta(V):= \sup\{|V(x)-V(y)|; |x-y|<\delta\}$ ({\it n.b.}$x$ has been fixed), it follows that
\begin{align*}
&\gamma_N\Big(O\cap (0,1)\Big) 
\geq \\
&\qquad 
e^{-2n(N)(V(x)+E_\delta(V)+\varepsilon)}
\qrob\Bigg[\int_{x-\delta}^{x+\delta} \exp\Big(2(n(N)-1)\int_0^1\log|\xi-\eta|d\hat{\mu}_{n(N)-1}(\eta)\Big)d\xi\Bigg]. 
\end{align*}
Using Jensen's inequality, we have 
\begin{align*}
&\gamma_N\Big(O\cap (0,1)\Big) \geq \\
&\quad 2\delta\,\exp\Bigg(-2n(N)(V(x)+E_\delta(V)+\varepsilon)+2(n(N)-1)\qrob\Big[\int_0^1 H_{x,\delta}(\eta)d\hat{\mu}_{n(N)-1}(\eta)\Big]\Bigg),  
\end{align*}
where 
\begin{align*}
&H_{x, \delta}(\eta):=\int_{x-\delta}^{x+\delta}\log|\xi-\eta|\frac{d\xi}{2\delta}. %\\
%&= 
%\begin{cases} 
%(2\delta)^{-1}\big((x+\delta-\eta)\log(x+\delta-\eta) - (x-\delta-\eta)\log(x-\delta-\eta)\big)-1 & (\eta \leq x-\delta), \\
%(2\delta)^{-1}\big((\eta-x+\delta)\log(\eta-x+\delta) + (x+\delta-\eta)\log(x+\delta-\eta)\big)-1& (x-\delta \leq \eta \leq x+\delta), \\
%(2\delta)^{-1}\big((\eta-x+\delta)\log(\eta-x+\delta) - (\eta-x-\delta)\log(\eta-x-\delta)\big)-1& (x+\delta \leq \eta),  
%\end{cases}
\end{align*}
%which is clearly continuous function (in $\eta$) over $\mathbb{R}$. 
Note that $H_{x,\delta}(\eta)$ can explicitly be calculated and turns out be a continuous function (in $\eta$) over $\mathbb{R}$. 
Thanks to Lemma \ref{exp-equivalence} and Theorem \ref{H-P} ({\it n.b.}$(n(N)-1)/N \to \rho$ as $N\to\infty$), we see that $\hat{\mu}_{n(N)-1}$ weakly converges to $\mu_0$ almost surely with $\qrob$, and hence
\[
R(\delta, N) := \int_0^1 H_{x,\delta}(\eta)d\hat{\mu}_{n(N)-1}(\eta) - \int_0^1 H_{x,\delta}(\eta)d\mu_0(\eta) \to 0
\]
as $N\to\infty$ almost surely with $\qrob$. 

Observe that 
\begin{align*}
\gamma_N\Big(O\cap (0,1)\Big) \geq 2\delta\,\exp\Big(&-2n(N)(V(x)+E_\delta(V)+\varepsilon)\\
&+2(n(N)-1)\int_0^1 H_{x, \delta}(\eta)d\mu_0(\eta)+2(n(N)-1)R(\delta, N)\Big),  
\end{align*}
and hence
\begin{equation}\label{Eq3.9}
\liminf_{N\to \infty}\frac{1}{N}\log \gamma_N\Big(O\cap (0,1)\Big) \geq -2\rho(V(x)+E_\delta(V)+\varepsilon)+2\rho\int_0^1 H_{x, \delta}(\eta)d\mu_0(\eta)
\end{equation}
holds by \eqref{assumption}. 

Write $F_\eta(x):=\int_0^x \log|\xi-\eta|d\xi$. We observe that 
\begin{align*}
\lim_{\delta \to 0}\int_0^1 H_{x, \delta}(\eta) d\mu_0(\eta)& = \lim_{\delta \to 0}\int_0^1 \frac{F_\eta(x+\delta)-F_\eta(x-\delta)}{2\delta}d\mu_0(\eta)\\
& = \lim_{\delta \to 0}\int_0^1 \frac{F_\eta(x+\delta)-F_\eta(x)}{2\delta}+\frac{F_\eta(x-\delta)-F_\eta(x)}{2(-\delta)}d\mu_0(\eta)\\
& = \int_0^1 \lim_{\delta \to 0} \frac{F_\eta(x+\delta)-F_\eta(x)}{2\delta}+\frac{F_\eta(x-\delta)-F_\eta(x)}{2(-\delta)}d\mu_0(\eta)\\
& = \int_0^1 \log|x-\eta|d\mu_0(\eta), 
\end{align*}
where we used the dominated convergence theorem in the third line and the  fundamental theorem of calculus in the fourth line. Therefore, taking the limit of \eqref{Eq3.9} as $\delta \to 0$, we have
\[
\liminf_{N\to \infty}\frac{1}{N}\log \gamma_N\Big(O\cap (0,1)\Big) \geq -2\rho\Big(V(x)-\int_0^1 \log|x-y|d\mu_0(y)\Big)-2\rho\, \varepsilon. 
\]
Since $\varepsilon>0$ can arbitrary be small, we have, for any $x \in X \cap (0,1)$, 
\[
\liminf_{N\to \infty}\frac{1}{N}\log \gamma_N\Big(O\cap (0,1)\Big) \geq -2\rho\Big(V(x)-\int_0^1 \log|x-y|d\mu_0(y)\Big). 
\]
Since $V(x)-\int_0^1 \log|x-y|d\mu_0(y)=\infty$ if $x \in\{0, 1\}$, we conclude that
\begin{equation}\label{LDP-lower-bound}
\liminf_{N\to \infty}\frac{1}{N}\log \gamma_N(O)\geq -2\rho \inf_{x\in X}\Big(V(x)-\int_0^1 \log|x-y|d\mu_0(y)\Big). 
\end{equation}

\bigskip\noindent
\underline{Step 3. A large deviation upper bound for $\gamma_N$:} Let $F$ be closed subset of $[0,1]$ and define
\[
\Phi_N^L(\xi,\mu) := V_N(\xi)\wedge L-\frac{n(N)-1}{n(N)}\int_0^1\log(|\xi-\eta|\vee L^{-1}) d\mu(\eta)
\]
on $[0,1]\times \probsp$ for any $L > 1$. Then we have 
\begin{equation}\label{Eq3.7} 
-\Phi_N^L \leq -(V_N\land L) \leq C 
\end{equation}
for some $C \in \mathbb{R}$, since $\log(|\xi-\eta|\lor L^{-1}) \leq 0$ on $(\xi, \eta)\in [0,1]\times[0,1]$. 

Observe that 
\[
\gamma_N(F) \leq \qrob\Big[\int_F \exp\Big(-2n(N)\Phi_N^L(\xi, \hat{\mu}_{n(N)-1})\Big)d\xi\Big]. 
\]
Choose and fix an arbitrarily small $\delta>0$. Dividing the integration range of $\qrob$ into two parts $\{\hat{\mu}_{n(N)-1}\in \{d(\cdot, \mu_0) \leq \delta\}\}$ and $\{\hat{\mu}_{n(N)-1}\in \{d(\cdot, \mu_0) > \delta\}\}$ and using \eqref{Eq3.7}, we obtain that  
\begin{align*}
\gamma_N(F)
&\leq \exp\Big(-2n(N)\inf_{(\xi, \mu)\in F\times\{d(\cdot, \mu_0) \leq \delta\}}\Phi_N^L(\xi, \mu)\Big) \\
&\qquad\qquad +e^{2n(N)C}\qrob\Big(\hat{\mu}_{n(N)-1}\in \{d(\cdot, \mu_0) > \delta\}\Big), 
\end{align*}
and hence  
\begin{align*}
\limsup_{N\to \infty}\frac{1}{N}\log\gamma_N(F) \leq \max\Big\{&\limsup_{N\to \infty}-\frac{2n(N)}{N}\inf_{(\xi, \mu)\in F\times\{d(\cdot, \mu_0) \leq \delta\}}\Phi_N^L(\xi, \mu),\\
&\limsup_{N\to \infty}\frac{2n(N)C}{N}+\limsup_{N\to \infty}\frac{1}{N}\log\qrob\Big(\hat{\mu}_{n(N)-1}\in \{d(\cdot, \mu_0) > \delta\}\Big)\Big\}
\end{align*}
by \eqref{assumption}. By Theorem \ref{H-P} together with Lemma \ref{exp-equivalence} we have 
\[
\limsup_{N\to \infty}\frac{1}{N}\log\qrob\Big(\hat{\mu}_{n(N)-1}\in \{d(\cdot, \mu_0) > \delta\}\Big)=-\infty, 
\]
and hence 
\[
\limsup_{N\to \infty}\frac{1}{N}\log\gamma_N(F) \leq -2\rho\liminf_{N\to \infty} \inf_{(\xi, \mu)\in F\times\{d(\cdot, \mu_0) \leq \delta\}}\Phi_N^L(\xi, \mu). 
\]
By \eqref{assumption}, $\Phi_N^L(\xi, \mu)$ converges to $V(\xi)\land L -\int_0^1\log(|\xi-\eta|\lor L^{-1})d\mu(\eta)$ uniformly on $[0,1]$ as $N\to \infty$, and thus
\[
\limsup_{N\to \infty}\frac{1}{N}\log\gamma_N(F) \leq -2\rho\inf_{(\xi, \mu)\in F\times\{d(\cdot, \mu_0) \leq \delta\}}\Big(V(\xi)\land L-\int_0^1\log(|\xi-\eta|\lor L^{-1})d\mu(\eta)\Big). 
\]
Since $V(\xi)\land L-\int_0^1\log(|\xi-\eta|\lor L^{-1})d\mu(\eta)$ is continuous on $[0,1]\times \probsp$, we observe that
\begin{align*}
&\inf_{(\xi, \mu)\in F\times\{d(\cdot, \mu_0) \leq \delta\}}\Big(V(\xi)\land L-\int_0^1\log(|\xi-\eta|\lor L^{-1})d\mu(\eta)\Big)\\
&\qquad\qquad\qquad \longrightarrow \inf_{\xi \in F}\Big(V(\xi)\land L-\int_0^1\log(|\xi-\eta|\lor L^{-1})d\mu_0(\eta)\Big)
\end{align*}
as $\delta\to0$ (see the proof of \cite[Lemma 4.1.6(a)]{Dembo-Zeitouni}). 
%Actually, it's enough to show that for any $\varepsilon > 0$, 
%\begin{align}\label{grf-ineq}
%\begin{aligned}
%&\alpha = \lim_{\delta \to 0}\inf_{(\xi, \mu)\in F\times\{d(\cdot, \mu_0) \leq \delta\}}\Big(V(\xi)\land L-\int\log(|\xi-\eta|\lor L^{-1})d\mu(\eta)\Big)\\
%&\qquad \qquad \geq \inf_{\xi \in F}\Big(V(\xi)\land L-\int\log(|\xi-\eta|\lor L^{-1})d\mu_0(\eta)\Big)-\varepsilon. 
%\end{aligned}
%\end{align}
%This inequality holds trivially when $\alpha = \infty$. If $\alpha < \infty$, let $\beta = \alpha+\varepsilon$. The set $I_\beta := \{(\xi, \mu); V(\xi)\land L-\int\log(|\xi-\eta|\lor L^{-1})d\mu(\eta) \leq \beta\}$ is closed in $[0, 1]\times \probsp$, by continuity of $V(\xi)\land L-\int\log(|\xi-\eta|\lor L^{-1})d\mu(\eta)$. So, the set $F\times \{d(\cdot, \mu_0)\leq \delta\} \cap I_\beta$ are non-empty and closed in $[0, 1]\times \probsp$. Since $[0, 1]\times \probsp$ is a compact space, $\bigcap_{\delta > 0} (F\times \{d(\cdot, \mu_0)\leq \delta\} \cap I_\beta) = F\times \{\mu_0\} \cap I_\beta \neq \emptyset$. so, there exist $(\xi, \mu_0) \in F\times \{\mu_0\}$ such that $V(\xi)\land L-\int\log(|\xi-\eta|\lor L^{-1})d\mu_0(\eta)-\varepsilon \leq \alpha$. Hence, \eqref{grf-ineq} holds. 
Thus, 
\[
\limsup_{N\to \infty}\frac{1}{N}\log\gamma_N(F) \leq -2\rho \inf_{\xi \in F}\Big(V(\xi)\land L-\int_0^1\log(|\xi-\eta|\lor L^{-1})d\mu_0(\eta)\Big), 
\]
and letting $L\to \infty$ we conclude that 
\begin{equation}\label{LDP-upper-bound}
\limsup_{N\to \infty}\frac{1}{N}\log\gamma_N(F) \leq -2\rho \inf_{\xi \in F}\Big(V(\xi)-\int_0^1\log|\xi-\eta|d\mu_0(\eta)\Big). 
\end{equation} 

\bigskip\noindent
\underline{Step 4. Transition from $\gamma_N$ to $\widehat{\prob}$:}  
We first claim that 
\begin{equation}\label{gamma-evaluation}
\frac{\gamma_N(X)}{\gamma_N([0,1])} \leq \widehat{\prob}(X)\leq n(N)\frac{\gamma_N(X)}{\gamma_N([0,1])}. 
\end{equation}
This follows from the following two observations: 
\begin{align*}
&\prob(\{(x_1, \ldots, x_{n(N)}) \in [0,1]^{n(N)}; \ x_{n(N)} \in X)\}) \\
&\qquad\leq \widehat{\prob}(X) 
\leq n(N)\prob(\{(x_1, \ldots, x_{n(N)}) \in [0,1]; x_{n(N)} \in X)\})
\end{align*}
and 
\begin{align*}
&\prob(\{(x_1, \ldots, x_{n(N)}) \in [0,1]^{n(N)}; \ x_{n(N)} \in X)\})\\
&=\frac{1}{Z(N)}\int_{[0,1]^{n(N)-1}}\int_X \exp\Big(-2n(N)\sum_{i=1}^{n(N)} V_N(x_i)+2\sum_{1\leq i<j \leq n(N)}\log|x_i-x_j|\Big)\prod_{i=1}^{n(N)}dx_i\\
&=\frac{C(N)}{Z(N)}\frac{1}{C(N)}
\int_{[0,1]^{n(N)-1}}\left(\int_X \exp\Big(-2n(N)V_N(\xi)+2\sum_{i=1}^{n(N)-1}\log|x_i-\xi|\Big)d\xi\right)\\
&\hspace{2cm}\times \exp\Big(-2n(N)\sum_{i=1}^{n(N)-1}V_N(x_i)+2\sum_{1\leq i<j \leq n(N)-1}\log|x_i-x_j|\Big)\prod_{i=1}^{n(N)-1} dx_i \\
&=\frac{\gamma_N(X)}{\gamma_N([0,1])}
\end{align*}
by $\gamma_N([0,1])=Z(N)/C(N)$. 

By \eqref{LDP-lower-bound}, \eqref{LDP-upper-bound} and Lemma \ref{effective-potential-property}, we have
\[
\lim_{N\to\infty}\frac{1}{N}\log\gamma_N([0,1]) = 
-2\rho \inf_{\xi \in [0,1]}\Big(V(\xi) - \int_0^1\log|\xi-\eta|d\mu_0(\eta)\Big)
=-2\rho D_{\rho, \kappa, \lambda}. 
\]
This and $\lim_{N\to\infty}\frac{1}{N}\log n(N) = 0$ (thanks to \eqref{assumption}) enable us to derive \eqref{true-LDP-lower-bound} and \eqref{true-LDP-upper-bound} from \eqref{LDP-lower-bound} and \eqref{LDP-upper-bound}, respectively. Hence we have completed the proof.  
\end{proof}

\section*{ACKNOWLEDGEMENT}
The author thanks his supervisor, Professor Yoshimichi Ueda for conversations, comments and editorial supports to this note.

\end{document}